\newtheorem{theorem}{Theorem}
\newtheorem{lemma}{Lemma}
\newtheorem{conjecture}{Conjecture}
\title{Lower-bounds on the growth of power-free languages 
over large alphabets
}
 \author{Matthieu Rosenfeld\thanks{CNRS, LIS, Aix Marseille Universit\'e, Universit\'e de Toulon, Marseille, France\newline
 Supported by the ANR project CoCoGro (ANR-16-CE40-0005)}}
\begin{document}

%
%

\maketitle

\begin{abstract} 
We study the growth rate of some power-free languages.
For any integer $k$ and real $\beta>1$, we let $\alpha(k,\beta)$ be the growth rate of the number of $\beta$-free words of a given length over the alphabet $\{1,2,\ldots, k\}$. 
Shur studied the asymptotic behavior of $\alpha(k,\beta)$ for $\beta\ge2$ as $k$ goes to infinity. He suggested a conjecture regarding the asymptotic behavior of $\alpha(k,\beta)$ as $k$ goes to infinity when $1<\beta<2$. He showed that for $\frac{9}{8}\le\beta<2$ the asymptotic upper-bound holds.

We show that the asymptotic lower bound of his conjecture holds.  This implies that the conjecture is true for $\frac{9}{8}\le\beta<2$.
\end{abstract}

\section{Introduction}
A \emph{square} is a word of the form $uu$ where $u$ is a non-empty word.
We say that a word is \emph{square-free} (or avoids squares) if none of its factors is a square.
For instance, \verb!hotshots! is a square while \verb!minimize! is square-free.
In 1906, Thue showed that there are arbitrarily long ternary square-free words \cite{Thue06}.
This result is  often regarded as the starting point of combinatorics on words and the generalizations of this particular question received a lot of attention.

One such generalization is the notion of fractional power. 
A word of the form $w=xx\ldots xy$ where $x$ is non-empty and $y$ is a prefix of $x$ is a \emph{power of exponent $\frac{|w|}{|x|}$} and of \emph{period $|x|$} (we also say that $w$ is a \emph{$\left(\frac{|w|}{|x|}\right)$-power}).
Any square is a $2$-power.
For any real $\beta>1$ and word $w$, we say that $w$ is \emph{$\beta$-free} (resp.  \emph{$\beta^+$-free}) if it contains no factor that is an $\alpha$-power with $\alpha\ge\beta$ (resp. $\alpha>\beta$).
This notion was introduced by Dejean and received a lot of attention. Dejean's conjectured that for any $k>5$
there exists a $\left(\frac{k}{k-1}^+\right)$-free word over $k$ letters, but no $\left(\frac{k}{k-1}\right)$-free word \cite{Dejeanconj}.
After more than 30 years and the work of numerous authors the conjecture became a theorem in 2009 when the remaining cases were solved  independently by Currie and Rampersad and by Rao \cite{currieDejean,RaoDejean}.

The \emph{growth} (or \emph{growth rate}) of any language $L$ over an alphabet $\mathcal{A}$ is the quantity
$\lim_{n\rightarrow \infty} \left|L\cap\mathcal{A}^n\right|^{1/n}$.
It is a simple consequence of Fekete's Lemma that this quantity is well defined for any factorial language (i.e., a language that contains all factors of each of its elements).
The growth of languages avoiding some kind of forbidden patterns have also been studied a lot. 
It gives more information regarding how easily one can avoid these patterns.
Naturally, the growth rate of languages avoiding fractional repetitions received some attention (see \cite{shursurvey} for a survey on this topic). 
In particular, Shur studied the growth of $\beta$-free and $\beta^+$-free languages when the size of the alphabet is large \cite{shurquestion}.
For any $k$ and real $\beta>1$, we let $\alpha(k,\beta)$ be the growth rate of the set of $\beta$-free words.
Shur provided tight asymptotic formulas for $\alpha(k,\beta)$ for all $\beta\ge2$ as $k$ goes to infinity. 
However, he left the case $\beta<2$ open and gave the following conjecture.
\begin{conjecture}[\cite{shurquestion,shursurvey}]\label{mainconj}
 For any fixed integer $n\ge3$ and arbitrarily large integer $k$ the following holds
 \begin{equation}\label{eqsansplus}
\alpha\left(k,\frac{n}{n-1}\right)=k+1-n-\frac{n-1}{k}+O\left(\frac{1}{k^2}\right)
   \end{equation}
 \begin{equation}\label{eqavecplus}
\alpha\left(k,\frac{n}{n-1}^+\right)=k+2-n-\frac{n-1}{k}+O\left(\frac{1}{k^2}\right)
   \end{equation}
\end{conjecture}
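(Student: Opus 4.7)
My plan for proving the conjecture decomposes into two directions, only one of which is new: the upper bounds in \eqref{eqsansplus} and \eqref{eqavecplus} are already established by Shur in the range $9/8 \le n/(n-1) < 2$ (equivalently, $3 \le n \le 9$) and are taken as input; the contribution here is to prove matching lower bounds, which I intend to carry out for every $n \ge 3$. Combining both directions yields the conjectured equalities for $3 \le n \le 9$; for $n \ge 10$ the conjecture remains conditional on a future upper bound.

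For the lower bound, set $\beta = n/(n-1)$ and let $t(m)$ be the number of $\beta$-free (resp.\ $\beta^+$-free) words of length $m$ over $\{1,\ldots,k\}$. I would prove a recurrence of the form
\begin{equation*}
t(m+1) \;\ge\; k\, t(m) \;-\; \sum_{p \ge 1} c_p\, t\!\left(m+1-\ell_p\right),
\end{equation*}
where $\ell_p$ is the length of the shortest forbidden power of period $p$ (namely $\lceil np/(n-1)\rceil$, bumped up by one when $(n-1)\mid p$ in the $\beta^+$ case), and $c_p$ bounds the number of length-$m$ words that create a new period-$p$ power when extended by some letter. The key observation is that for each period $p$ at most one candidate extension letter can produce a new period-$p$ power, namely $\ell = w_{m+1-p}$; whether this candidate is actually forbidden depends on whether the appropriate suffix of $w$ already matches an earlier factor. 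Periods with $\ell_p = p+1$ — concretely $1\le p\le n-1$ in the $\beta$-free case and $1\le p \le n-2$ in the $\beta^+$-free case — contribute automatically, producing the leading $-(n-1)$ (resp.\ $-(n-2)$) in the extension count. Longer periods contribute only when $w$ already exhibits a specific near-power suffix pattern; the frequency of such patterns is controlled by invoking the inductive ratio $t(m+1)/t(m) \approx x$ and replacing $t(m+1-\ell_p)/t(m)$ by $x^{-(\ell_p-1)}$ up to controlled error. Solving the characteristic polynomial of the recurrence and expanding the dominant root in $1/k$ then delivers the asserted asymptotic lower bound.

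The principal obstacle will be extracting the precise second-order coefficient $-(n-1)$ on the $1/k$ correction. The leading term $k+1-n$ (resp.\ $k+2-n$) is a direct period-by-period count and emerges painlessly; by contrast, the $1/k$ coefficient requires simultaneously summing the contributions of all periods $p\ge n$, performing inclusion-exclusion between periods that happen to force the same bad letter, and running a bootstrap in which a tentative expansion $t(m+1)/t(m) = k+1-n + a/k + O(1/k^2)$ is substituted back into the recurrence and the equation is solved self-consistently to force $a = -(n-1)$. The substantive content of the argument lies in this bookkeeping; once it is in place, comparison with Shur's upper bound closes \eqref{eqsansplus} and \eqref{eqavecplus} in the stated range.
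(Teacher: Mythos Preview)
Your decomposition into Shur's upper bounds plus new lower bounds matches the paper exactly, and your inductive recurrence $t(m+1)\ge k\,t(m)-(\text{bad extensions by period})$ is precisely the paper's mechanism. Where you diverge is in the anticipated difficulty of the second-order term, and there your plan is substantially overcomplicated.

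First, a parameter correction: the quantity that should appear in the recurrence is the \emph{tail length} $\lceil p/(n-1)\rceil$ (respectively $\lfloor p/(n-1)\rfloor+1$ in the $\beta^+$ case), not the full power length $\ell_p=\lceil np/(n-1)\rceil$. If a length-$(m+1)$ word acquires a new period-$p$ power as a suffix, its last $\lceil p/(n-1)\rceil$ letters are determined by the preceding prefix (the tail is a prefix of the period), so $|F_p|\le t\!\left(m+1-\lceil p/(n-1)\rceil\right)$ with no extra coefficient $c_p$ required; your formula $c_p\,t(m+1-\ell_p)$ is not the right shape.

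Second, and more importantly, neither inclusion--exclusion nor a bootstrap is needed to obtain the $-(n-1)/k$ coefficient. The paper simply takes the crude union bound $|F|\le\sum_{p\ge1}|F_p|$, applies the inductive hypothesis $t(j+1)\ge x\,t(j)$ to get $|F_p|\le x^{1-\lceil p/(n-1)\rceil}t(m)$, and sums the resulting geometric series to obtain $|F|\le (n-1)\frac{x}{x-1}\,t(m)$. The induction closes whenever $k-(n-1)\frac{x}{x-1}\ge x$; this is a quadratic in $x$ whose largest root is $\tfrac12\bigl(k+2-n+\sqrt{(k-n)^2+4-4n}\bigr)$, and Taylor-expanding that root in $1/k$ already gives $k+1-n-\frac{n-1}{k}+O(1/k^2)$ with the correct second-order coefficient. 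The $\beta^+$ case is identical with the quadratic $k+1-(n-1)\frac{x}{x-1}\ge x$. So the ``principal obstacle'' you describe dissolves: the union bound is already tight enough, and the $1/k$ coefficient falls out of a single quadratic rather than a self-consistent substitution.
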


Let us extend the strict total order $<$ of the reals to numbers of the form $x^+$ where $x$ is a real in such a way that $x^+$ is right after $x$ in the ordering for any real $x$. That is for all $x,y\in \mathbb{R}$, 
\begin{itemize}
 \item $x<y^+$ if and only if $x\le y$,
 \item and $x^+<y$ if and only if $x< y$.
\end{itemize}
With this definition $\alpha\left(k,x\right)$ is a decreasing function of $x$. Moreover, if conjecture \ref{mainconj} holds then for every integers $n$ and $k$ we have 
\begin{equation}\label{smallvar}
\alpha\left(k,\frac{n}{n-1}\right)-\alpha\left(k,\frac{n+1}{n}^+\right)=\frac{1}{k}+O\left(\frac{1}{k^2}\right)
\end{equation} and
\begin{equation}\label{bigjump}
 \alpha\left(k,\frac{n}{n-1}^+\right)-\alpha\left(k,\frac{n}{n-1}\right)=1+O\left(\frac{1}{k^2}\right)
\end{equation}

Hence, if the conjecture holds, it provides bounds on the asymptotic behavior of $\alpha\left(k,\beta\right)$ tight up to $\frac{1}{k}$ for every $\beta<2$. In particular, it implies that most of the jump between 
$\alpha\left(k,\frac{n}{n-1}\right)$ and $\alpha\left(k,\frac{n+1}{n}\right)$ occurs between $\alpha\left(k,\frac{n}{n-1}\right)$ and $\alpha\left(k,\frac{n}{n-1}^+\right)$. 
This conjecture implies other similar empirical facts that also hold for $\beta>2$ and illustrate the particular behavior of $\alpha(k,\beta)$ (facts \eqref{smallvar} and \eqref{bigjump} are respectively called \emph{small variation} and \emph{big jump} in  \cite{shurquestion}).

Shur showed that for any integer $n\le9$ the right-hand sides of equations \eqref{eqsansplus} and \eqref{eqavecplus} are indeed upper-bounds of the left-hand sides in both of these equations. 
In this article, we show that, for any integer $n>2$, the right-hand sides are lower bounds in both of these equations. This implies, in particular, that the conjecture holds for any integer $n\le9$ which provides tight bounds on the asymptotic behavior of $\alpha(k,\beta)$ for any $\beta$ such that $\frac{9}{8}\le\beta<2$.

The idea of the proof is in fact really simple and uses the idea that was introduced in \cite{prooftechnique}. To show that the language has exponential growth $\gamma$, we show the slightly stronger fact that for any $n$, the number of words of length $n+1$ is at least $\gamma$  times larger than the number of words of length $n$. The proof is a simple induction and exploits the locality of the problem to obtain a lower bound on the number of words of length $n+1$ based on the number of shorter words in the language. 
In this setting the same result could be obtained with the power series method for pattern avoidance \cite{BELL20071295,BLANCHETSADRI201317,doublepat,rampersadpowerseries}
, but the proof is slightly more complicated.

\section{The lower bounds}
We show the following result.
\begin{theorem}\label{mainth}
 For any fixed integer $n\ge2$ and arbitrarily large integer $k$ the following holds
 \begin{equation}\label{ineqsansplus}
\alpha\left(k,\frac{n}{n-1}\right)\ge k+1-n-\frac{n-1}{k}+O\left(\frac{1}{k^2}\right)\,,
   \end{equation}
 \begin{equation}\label{ineqavecplus}
\alpha\left(k,\frac{n}{n-1}^+\right)\ge k+2-n-\frac{n-1}{k}+O\left(\frac{1}{k^2}\right)\,.
   \end{equation}
\end{theorem}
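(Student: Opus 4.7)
The plan is to adapt the inductive counting technique referenced in the introduction. For each fixed $k$ and $n$, let $L_m \subseteq \{1, \ldots, k\}^m$ denote the set of $\beta$-free (respectively $\beta^+$-free) words of length $m$. For a suitable $\gamma = \gamma(k, n)$ matching the claimed right-hand side, I will prove by strong induction on $m$ that $|L_{m+1}| \ge \gamma |L_m|$, which immediately iterates to $|L_m| \ge \gamma^m$ and hence $\alpha \ge \gamma$; the base case $m = 0$ is the trivial inequality $|L_1| = k \ge \gamma$.

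For the inductive step I bound the number of pairs $(w, a) \in L_m \times \{1, \ldots, k\}$ for which $wa \notin L_{m+1}$. Such a ``bad'' extension must introduce a forbidden power as a suffix of $wa$, and I classify bad extensions by the period $p$ of that suffix. Setting $L_p = \lceil np/(n-1) \rceil$ in the $\beta$-free case and $L_p = \lfloor np/(n-1) \rfloor + 1$ in the $\beta^+$-free case, the minimal forbidden suffix of period $p$ has length $L_p$, and the extending letter is forced to be the $p$-th letter from the right end of $w$. Hence the set $B_p \subseteq L_m$ of words admitting a bad extension of period $p$ consists of the $w$ whose length-$(L_p - 1)$ suffix already has period $p$; such a $w$ is determined by its prefix of length $m - (L_p - 1 - p)$, which is itself $\beta$-free (respectively $\beta^+$-free), giving $|B_p| \le |L_{m - (L_p - 1 - p)}|$. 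Iterating the induction hypothesis then yields $|B_p| \le \gamma^{-(L_p - 1 - p)}|L_m|$.

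A direct calculation shows that $L_p - 1 - p$ is piecewise constant in $p$: in the $\beta$-free case each non-negative integer appears exactly $n - 1$ times, and in the $\beta^+$-free case the value $0$ appears $n - 2$ times and every positive integer appears $n - 1$ times. Summing the resulting geometric series and substituting into $|L_{m+1}| \ge k|L_m| - \sum_p |B_p|$ reduces the whole induction to the two numerical inequalities
\begin{equation*}
(k - \gamma)(\gamma - 1) \ge (n-1)\gamma
\qquad\text{and}\qquad
(k + 2 - n - \gamma)(\gamma - 1) \ge n - 1,
\end{equation*}
one for each case.

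The final step is to exhibit $\gamma$ with the prescribed $1/k$ expansion satisfying each inequality. Solving each as a quadratic in $\gamma$ and expanding the larger root for large $k$ gives $\gamma = k + 1 - n - (n-1)/k + O(1/k^2)$ in the first case and $\gamma = k + 2 - n - (n-1)/k + O(1/k^2)$ in the second, which are precisely the claimed bounds. I expect the only mildly delicate point to be this final asymptotic matching, because the structural bound on $|B_p|$, the block decomposition of the geometric sum, and the base case are all immediate.
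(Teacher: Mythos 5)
Your proposal is correct and follows essentially the same route as the paper: the same induction showing that the count of free words grows by a factor $\gamma$ at each length, the same period-by-period classification of bad extensions with the forbidden suffix forced to have exactly the minimal length, and the same geometric-series collapse; indeed your two inequalities $(k-\gamma)(\gamma-1)\ge(n-1)\gamma$ and $(k+2-n-\gamma)(\gamma-1)\ge n-1$ are algebraically identical to the paper's conditions $k-(n-1)\frac{\gamma}{\gamma-1}\ge\gamma$ and $k+1-(n-1)\frac{\gamma}{\gamma-1}\ge\gamma$. Your asymptotic expansions of the larger roots likewise reproduce the paper's Taylor-expansion step, so nothing is missing.
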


\subsection{The first lower bound}
This subsection is devoted to the proof of equation \eqref{ineqsansplus}.
Let us first show the following stronger result.
\begin{lemma}\label{lowboundlemma}
Let $k$ and $n$ be two integers with $k>n>1$. For all $i$, let $C_i$
be the number of $\left(\frac{n}{n-1}\right)$-free words of length $i$ over a $k$-letter alphabet.
If $x>1$ is a real such that we have $k-(n-1)\frac{x}{x-1}\ge x$, then for any integer $i$
$$C_{i+1}\ge x C_{i} \,.$$ 
\end{lemma}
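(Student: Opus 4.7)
The plan is to prove the lemma by strong induction on $i$. The base case $i=0$ reduces to $C_1 = k \geq x$, which is immediate from the hypothesis since $(n-1)x/(x-1)>0$. For the inductive step, I assume $C_{j+1}\geq x C_j$ for all $j<i$, which yields $C_{i-q}\leq x^{-q}C_i$ for $0\leq q\leq i$. The central identity is $C_{i+1} = kC_i - B_i$, where $B_i$ is the number of ``bad'' pairs $(w,a)$ consisting of a $(n/(n-1))$-free word $w$ of length $i$ and a letter $a$ such that $wa$ is \emph{not} $(n/(n-1))$-free. This holds because every prefix of a $(n/(n-1))$-free word is itself $(n/(n-1))$-free, so each $(n/(n-1))$-free word of length $i+1$ corresponds to exactly one ``good'' pair.

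The crux is to bound $B_i$. Any forbidden factor of $wa$ must be a suffix (since $w$ itself is free), with some period $p\geq 1$ and length at least $np/(n-1)$. Set $\ell_p = \lceil np/(n-1)\rceil$, the minimum such length. For each $p$, I count the pairs $(w,a)$ for which the length-$\ell_p$ suffix of $wa$ has period $p$ (which already forces this suffix to be forbidden): the letter $a$ must equal $w[i+1-p]$, and since $\ell_p-1\geq p$, the length-$(\ell_p-1)$ suffix of $w$ is determined by its first $p$ letters. Hence $w$ is determined by its length-$(i-q_p)$ prefix, where $q_p = \ell_p - 1 - p\geq 0$, and since this prefix is itself $(n/(n-1))$-free, the number of such pairs is at most $C_{i-q_p}$. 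Summing over $p$ gives $B_i\leq \sum_{p\geq 1} C_{i-q_p}$.

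Grouping this sum by the value $q=q_p$ is where the factor $n-1$ appears: a short ceiling computation shows that $\{p\geq 1: q_p = q\} = \{(n-1)q+1,\ldots,(n-1)(q+1)\}$, a block of exactly $n-1$ consecutive integers. Hence $B_i\leq (n-1)\sum_{q\geq 0}C_{i-q}$, and the inductive bound $C_{i-q}\leq x^{-q}C_i$ together with the geometric series yields $B_i\leq (n-1)\frac{x}{x-1}C_i$. Therefore $C_{i+1}\geq \bigl(k - (n-1)\tfrac{x}{x-1}\bigr)C_i\geq xC_i$, by the hypothesis on $x$, closing the induction. I expect the main technical point to be the combinatorial verification that $p\mapsto q_p$ is exactly $(n-1)$-to-one: this is where the specific exponent $n/(n-1)$ enters and fixes the leading constant in the bound, whereas the rest is a routine append-and-count argument in the spirit of the technique from \cite{prooftechnique}.
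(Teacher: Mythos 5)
Your proposal is correct and follows essentially the same route as the paper: the same induction, the same identity $C_{i+1}=kC_i-|F|$ (your bad pairs $(w,a)$ are in bijection with the paper's set $F$), the same decomposition by period, the same observation that the repetition determines the last letters from a free prefix of length $i+1-\lceil p/(n-1)\rceil$ (your $q_p=\lceil p/(n-1)\rceil-1$ is exactly the paper's $|y|=\lceil j/(n-1)\rceil$), the same $(n-1)$-to-one grouping, and the same geometric series giving $|F|\le(n-1)\frac{x}{x-1}C_i$. The only cosmetic difference is that you count all pairs whose length-$\ell_p$ suffix is $p$-periodic directly, whereas the paper pins down the suffix length via the minimality argument (removing the last letter yields a free word); both produce identical bounds.
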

\begin{proof}
 We proceed by induction on $i$. 
By definition, we have $C_1=k$ and $C_2= k(k-1)$ and by assumptions, we have $x\le k-(n-1)\frac{x}{x-1}\le k-1$ which implies $C_2\ge xC_1$.
 Let $i$ be an integer such that the Lemma holds for any integer smaller than $i$.
 Let $F$ be the set of words of length $i+1$ that are not $\left(\frac{n}{n-1}\right)$-free but whose prefix of length $i$ is $\left(\frac{n}{n-1}\right)$-free.
 Then
 \begin{equation}\label{maineqF}
  C_{i+1}=kC_i-|F|
 \end{equation}
We now bound the size of $F$. 
For every $j$, let $F_j$ be the set of words from $F$ that contains a repetition of period $j$ and exponent at least $\frac{n}{n-1}$. Then $|F|\le \sum_{j\ge1}|F_j|$.

For any word $w\in F_j$, there exist $x$ and $y$ such that  that $xy$ is a suffix of $w$ and is a repetition of period $x$ with $|x|=j$ and of exponent at least $\frac{n}{n-1}$ which implies $|y|\ge\frac{j}{n-1}$.
Moreover, if we remove the last letter of $xy$ we obtain a $\left(\frac{n}{n-1}\right)$-free word which implies that
$|y|-1<\frac{j}{n-1}$ and thus $|y|=\left\lceil\frac{j}{n-1}\right\rceil$. Since $xy$ is a repetition of period $x$ it also implies that $y$ is uniquely determined by $x$.
Thus, for any word $w\in F_j$ the last $\left\lceil\frac{j}{n-1}\right\rceil$ letters are uniquely determined by the prefix of length $i+1-\left\lceil\frac{j}{n-1}\right\rceil$ of $w$. The prefix of length $i+1-\left\lceil\frac{j}{n-1}\right\rceil$ of any such word  belongs to $C_{i+1-\left\lceil\frac{j}{n-1}\right\rceil}$ since it is $\left(\frac{n}{n-1}\right)$-free. We deduce the following bound
$$|F_j|\le C_{i+1-\left\lceil\frac{j}{n-1}\right\rceil}\,.$$
By the induction hypothesis, we get 
$$|F_j|\le x^{1-\left\lceil\frac{j}{n-1}\right\rceil}C_{i}\,.$$
Thus
$$ |F|\le \sum_{j\ge1} x^{1-\left\lceil\frac{j}{n-1}\right\rceil}C_{i}
 = (n-1)C_{i}\sum_{j\ge1} x^{1-j}=(n-1)C_i\frac{x}{x-1}\,.
$$
Substituting $|F|$ in equation \eqref{maineqF} yields
$$C_{i+1}\ge C_i\left(k-(n-1)\frac{x}{x-1}\right)\ge C_i x$$
as desired.\qed
\end{proof}

We can now easily deduce equation \eqref{ineqsansplus}.

\begin{lemma}\label{taylorexpansion}
 For any fixed integer $n$ and arbitrarily large integer $k$ the following holds
 $$\alpha\left(k,\frac{n}{n-1}\right)\ge k+1-n-\frac{n-1}{k}+O\left(\frac{1}{k^2}\right)\,.$$ 
\end{lemma}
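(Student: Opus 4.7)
The approach is to apply Lemma \ref{lowboundlemma} with the largest $x>1$ that satisfies its hypothesis and then Taylor-expand that quantity in powers of $1/k$. For $x>1$ the hypothesis $k-(n-1)\tfrac{x}{x-1}\ge x$ is equivalent, after multiplying by $x-1>0$, to the quadratic inequality
\[
x^2 - (k-n+2)x + k \le 0,
\]
which holds precisely on the interval between the two real roots of the polynomial (its discriminant $(k-n+2)^2-4k$ is positive for all sufficiently large $k$). The optimal choice is therefore the larger root
\[
x_k \;=\; \frac{(k-n+2)+\sqrt{(k-n+2)^2-4k}}{2},
\]
which exceeds $1$ for $k$ large. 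Lemma \ref{lowboundlemma} with $x=x_k$ then yields $C_{i+1}\ge x_k C_i$ for every $i$, hence $C_i\ge x_k^{i-1}C_1$, and taking $i$-th roots and letting $i\to\infty$ gives $\alpha\bigl(k,\tfrac{n}{n-1}\bigr)\ge x_k$.

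What remains is a routine asymptotic expansion. I would rewrite the discriminant as $k^2-2nk+(n-2)^2$, factor out $k^2$ inside the square root, and apply $\sqrt{1+w}=1+w/2-w^2/8+O(w^3)$ to $w=-2n/k+(n-2)^2/k^2$. Tracking every contribution up to order $1/k^2$ produces
\[
\sqrt{(k-n+2)^2-4k} \;=\; k - n - \frac{2(n-1)}{k} + O\!\left(\frac{1}{k^2}\right),
\]
after which $x_k=k+1-n-(n-1)/k+O(1/k^2)$ follows at once, matching \eqref{ineqsansplus}.

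The only point requiring attention is the $1/k$ coefficient in $x_k$: it depends on the $1/k^2$ coefficient inside the square root, which itself picks up contributions from \emph{both} the linear part $w/2$ (through the $1/k^2$ term of $w$) and the quadratic part $-w^2/8$ (through the $1/k$ term of $w$). The two combine via the identity $(n-2)^2-n^2=-4(n-1)$, which is what ultimately yields the coefficient $-(n-1)$ rather than, say, $-(n-2)$. Once this small cancellation is handled, the proof is essentially pure bookkeeping on top of Lemma \ref{lowboundlemma}.
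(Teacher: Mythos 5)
Your proposal is correct and follows essentially the same route as the paper: both reduce the hypothesis of Lemma \ref{lowboundlemma} to the quadratic $x^2-(k-n+2)x+k\le 0$, take the larger root (your $x_k$ is identical to the paper's $\frac{k+2-n+\sqrt{4+(k-n)^2-4n}}{2}$, since $(k-n+2)^2-4k=(k-n)^2+4-4n$), and Taylor-expand in $1/k$, with your expansion correctly capturing the cancellation $(n-2)^2-n^2=-4(n-1)$ that the paper handles implicitly via the series $f(y)=1+(1-n)y+(1-n)y^2+O(y^3)$. Your direct expansion of the square root and your explicit justification of the step from $C_{i+1}\ge x_k C_i$ to $\alpha\ge x_k$ are, if anything, slightly more detailed than the paper's.
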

\begin{proof}
By Lemma \ref{lowboundlemma}, we know that for any integers $k$ and $n$, and real  $x>1$ such that we have 
\begin{equation}\label{secdeg}
 k-(n-1)\frac{x}{x-1}\ge x
\end{equation}
we also have
$$\alpha\left(k,\frac{n}{n-1}\right)\ge x\,.$$
Since equation \eqref{secdeg} is a 2nd degree equation it is easy to see that if $k \ge n+2\sqrt{n+1}$, then
$x= \frac{k+2 -n + \sqrt{4 + (k - n)^2 - 4 n}}{2}$ is the largest solution of equation \eqref{secdeg}. 
Thus as long as $k \ge n+2\sqrt{n+1}$, we have
$$\alpha\left(k,\frac{n}{n-1}\right)\ge \frac{k+2 -n + \sqrt{4 + (k - n)^2 - 4 n}}{2}\,.$$
Let $f$ be the function that maps any real $y$ to $$f(y)=\frac{1+2y -ny + \sqrt{4y^2 + (1 - ny)^2 - 4 n y^2}}{2}\,,$$ then 
$$\alpha\left(k,\frac{n}{n-1}\right)\ge k \times f\left(\frac{1}{k}\right)\,.$$ 
The first terms of the Taylor Series of $f$ at $0$ are
$$f(y)=1+(1-n) y+(1-n) y^2+O\left(y^3\right)$$ and we easily deduce that for abritrarily large $k$ 
 $$\alpha\left(k,\frac{n}{n-1}\right)\ge k+1-n-\frac{n-1}{k}+O\left(\frac{1}{k^2}\right)$$ 
as desired.\qed
\end{proof}

\subsection{The second lower bound}

This subsection is devoted to the proof of equation \eqref{ineqavecplus}.
The proof is almost the same as the proof of Lemma \ref{lowboundlemma}.
The only difference is that we get
$|y|=\left\lfloor\frac{j}{n-1}\right\rfloor+1$ instead of 
$|y|=\left\lceil\frac{j}{n-1}\right\rceil$ which impacts the computations. We still provide the full proof for the sake of completeness.
\begin{lemma}\label{lowboundlemma2} 
Let $k$ and $n$ be two integers with $k>n>1$. For all $i$, let $C_i$
be the number of $\left(\frac{n}{n-1}^+\right)$-free words of length $i$ over a $k$-letter alphabet. If $x>1$ is a real such that we have $k+1-(n-1)\frac{x}{x-1}\ge x$, then for any integer $i$
$$C_{i+1}\ge x C_{i} \,.$$ 
\end{lemma}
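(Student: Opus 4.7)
The plan is to mirror the inductive scheme of Lemma \ref{lowboundlemma} almost verbatim. First I would dispatch the base case $C_2 \ge x C_1$: when $n \ge 3$ we have $C_2 = k(k-1)$ and when $n = 2$ we have $C_2 = k^2$, and in either case the hypothesis $k + 1 - (n-1)\frac{x}{x-1} \ge x$ (combined with $x>1$, which forces $(n-1)\frac{x}{x-1}>n-1$) suffices to verify the inequality. For the inductive step I would write $C_{i+1} = k C_i - |F|$, where $F$ is the set of words of length $i+1$ whose length-$i$ prefix is still $\bigl(\frac{n}{n-1}^+\bigr)$-free, decompose $F = \bigcup_{j \ge 1} F_j$ according to the period $j$ of the terminal repetition, and bound each $|F_j|$ by observing that the terminal repetition is forced by its preceding prefix.

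The sole substantive departure from the original proof lies in determining the length of the tail $y$ when the terminal repetition $xy$ has $|x| = j$. Because freeness is now strict, the exponent inequality reads $\frac{j+|y|}{j} > \frac{n}{n-1}$, yielding $|y| > \frac{j}{n-1}$; and because removing the last letter restores $\bigl(\frac{n}{n-1}^+\bigr)$-freeness, one gets $|y| - 1 \le \frac{j}{n-1}$. Together these pin down $|y| = \lfloor j/(n-1) \rfloor + 1$, in place of the ceiling $\lceil j/(n-1) \rceil$ that appeared in Lemma \ref{lowboundlemma}. The induction hypothesis then yields $|F_j| \le C_{i - \lfloor j/(n-1)\rfloor} \le x^{-\lfloor j/(n-1)\rfloor} C_i$.

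The only mildly delicate calculation is the sum $\sum_{j \ge 1} x^{-\lfloor j/(n-1)\rfloor}$. I would group indices by the common value $m$ of $\lfloor j/(n-1) \rfloor$: the level $m = 0$ contributes only $n-2$ terms (since $j = 0$ is excluded) while every level $m \ge 1$ contributes a full $n-1$ terms. Summing the resulting geometric series gives $(n-2) + \frac{n-1}{x-1}$, which rearranges to $(n-1)\frac{x}{x-1} - 1$; substituting into $C_{i+1} = kC_i - |F|$ yields $C_{i+1} \ge \bigl(k + 1 - (n-1)\frac{x}{x-1}\bigr) C_i \ge x C_i$ by the hypothesis. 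I expect no genuine obstacle here; the only trap is the off-by-one at $m = 0$, and it is precisely this saving of one unit over the original geometric sum that upgrades the additive constant from $k+1-n$ to $k+2-n$ in Theorem \ref{mainth}.
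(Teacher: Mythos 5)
Your proposal is correct and follows the paper's proof essentially verbatim: the same induction on $i$, the same identity $C_{i+1}=kC_i-|F|$ with the decomposition into $F_j$, the same determination $|y|=\left\lfloor j/(n-1)\right\rfloor+1$ of the tail length, and the same evaluation $\sum_{j\ge1} x^{-\lfloor j/(n-1)\rfloor}=(n-1)\frac{x}{x-1}-1$ (your grouping by levels $m=\lfloor j/(n-1)\rfloor$ is just a different bookkeeping of the paper's $-1+(n-1)\sum_{j\ge0}x^{-j}$). Your explicit base case, distinguishing $C_2=k^2$ for $n=2$ from $C_2=k(k-1)$ for $n\ge3$, is in fact slightly more careful than the paper, whose proof of this lemma omits the base case altogether.
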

\begin{proof}
We proceed by induction on $i$. 
 Let $i$ be an integer such that the Lemma holds for any integer smaller than $i$.
 Let $F$ be the set of words of length $i+1$ that are not $\left(\frac{n}{n-1}^+\right)$-free but whose prefix of length $i$ is $\left(\frac{n}{n-1}^+\right)$-free.
 Then
 \begin{equation}\label{maineqF2}
  C_{i+1}=kC_i-|F|\,.
 \end{equation}
We now bound the size of $F$. 
For every $j$, let $F_j$ be the set of words from $F$ that contains a repetition of period $j$ and exponent greater than $\frac{n}{n-1}$. Then clearly $|F|\le \sum_{j\ge1}|F_j|$.

By definition, for any word $w\in F_j$, there exist $u,x$ and $y$ such that $|x|=j$, $y$ is a prefix of $x$, $|y|>\frac{j}{n-1}$ and $w=uxy$. 
Moreover, if we remove the last letter of $xy$ we obtain a $\left(\frac{n}{n-1}^+\right)$-free word which implies that
$|y|-1\le\frac{j}{n-1}$ and thus $|y|=\left\lfloor\frac{j}{n-1}\right\rfloor+1$. 
Thus for any word $w\in F_j$ the last $\left\lfloor\frac{j}{n-1}\right\rfloor+1$ letters are uniquely determined by the prefix of length $i-\left\lfloor\frac{j}{n-1}\right\rfloor$ of $w$. By definition, the prefix of length $i-\left\lfloor\frac{j}{n-1}\right\rfloor$ of any such word is $\left(\frac{n}{n-1}^+\right)$-free and belongs to $C_{i-\left\lfloor\frac{j}{n-1}\right\rfloor}$. This implies the following bound
$$|F_j|\le C_{i-\left\lfloor\frac{j}{n-1}\right\rfloor}\,.$$
By the induction hypothesis, we get 
$$|F_j|\le x^{-\left\lfloor\frac{j}{n-1}\right\rfloor}C_{i}\,.$$
Thus
$$ |F|\le C_{i}\sum_{j\ge1} x^{-\left\lfloor\frac{j}{n-1}\right\rfloor}
 = C_{i}\left(-1+(n-1)\sum_{j\ge0} x^{-j}\right)=C_i\left(\frac{(n-1)x}{x-1}-1\right)\,.
$$
Substituting $|F|$ in equation \eqref{maineqF2} yields
$$C_{i+1}\ge C_i\left(k+1-\frac{(n-1)x}{x-1}\right)\ge C_i x$$
as desired.\qed
\end{proof}
 The condition is once again a quadratic inequality so we easily verify that the condition holds for
 $$x=\frac{k+3 -n + \sqrt{5 + 2 k + k^2 - 2 (3 + k) n + n^2}}{2}\,.$$
 We can compute the first terms of a well chosen Taylor Series to obtain the following result (we can also simply ask Mathematica or any other formal mathematical software the asymptotic behavior of this function). 
\begin{lemma}
 For any fixed integer $n$ and arbitrarily large integer $k$ the following holds
 $$\alpha\left(k,\frac{n}{n-1}\right)\ge k+2-n-\frac{n-1}{k}+O\left(\frac{1}{k^2}\right)\,.$$ 
\end{lemma}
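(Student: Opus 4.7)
The strategy mirrors the derivation of Lemma \ref{taylorexpansion} from Lemma \ref{lowboundlemma}, with Lemma \ref{lowboundlemma2} playing the role of its predecessor. First I would rewrite the hypothesis of Lemma \ref{lowboundlemma2}: multiplying $k+1-(n-1)\frac{x}{x-1}\ge x$ by $x-1>0$ and collecting terms gives the quadratic inequality $x^2-(k+3-n)x+(k+1)\le 0$, whose discriminant $(k+3-n)^2-4(k+1)=k^2+2k+5-2(k+3)n+n^2$ is positive once $k$ is sufficiently large compared to $n$. For such $k$ the admissible values of $x$ form an interval whose right endpoint is
\[
x_\star := \frac{k+3-n+\sqrt{k^2+2k+5-2(k+3)n+n^2}}{2},
\]
which matches the closed form stated just before the lemma; Lemma \ref{lowboundlemma2} then yields $\alpha(k,\tfrac{n}{n-1}^+)\ge x_\star$.

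The remaining task is to expand $x_\star$ in powers of $1/k$. I would pull a factor of $k$ out of the square root so that it becomes $k\sqrt{1+u}$ with $u=\tfrac{2-2n}{k}+\tfrac{n^2-6n+5}{k^2}$, then apply $\sqrt{1+u}=1+\tfrac{u}{2}-\tfrac{u^2}{8}+O(u^3)$. After multiplication by $k$ the $u/2$ piece contributes $(1-n)+\tfrac{n^2-6n+5}{2k}$, while $-u^2/8$ contributes $-\tfrac{(1-n)^2}{2k}+O(1/k^2)$; the two $1/k$ corrections collapse to $\tfrac{2(1-n)}{k}$. Adding the $k+3-n$ from outside the square root and dividing by $2$ then gives $x_\star=k+2-n-\tfrac{n-1}{k}+O(1/k^2)$, which is the desired bound.

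No step is genuinely difficult: both the quadratic solve and the series expansion are direct analogues of what was done in Lemma \ref{taylorexpansion}, the arithmetic merely shifting the constant and linear coefficients by $1$ because the hypothesis of Lemma \ref{lowboundlemma2} replaces $k$ in inequality \eqref{secdeg} by $k+1$. The only thing one needs to watch is the cancellation between the $\tfrac{n^2-6n+5}{2k}$ coming from $u/2$ and the $-\tfrac{(1-n)^2}{2k}$ coming from $-u^2/8$, which conspire to give the clean coefficient $-(n-1)$. As the paper itself suggests, one may alternatively hand the closed form for $x_\star$ to a computer algebra system and read off the first two terms of the asymptotic expansion directly.
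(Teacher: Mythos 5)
Your proposal is correct and follows essentially the same route the paper sketches: apply Lemma \ref{lowboundlemma2}, reduce its hypothesis to the quadratic $x^2-(k+3-n)x+(k+1)\le 0$ whose larger root is the stated $x_\star$, and Taylor-expand in $1/k$ exactly as in Lemma \ref{taylorexpansion}; your discriminant and the cancellation $\frac{n^2-6n+5}{2k}-\frac{(1-n)^2}{2k}=\frac{2(1-n)}{k}$ both check out, merely filling in details the paper delegates to a computer algebra system. One minor remark: the lemma as printed says $\alpha\left(k,\frac{n}{n-1}\right)$ but clearly intends $\alpha\left(k,\frac{n}{n-1}^+\right)$ (this is equation \eqref{ineqavecplus}), which is precisely what your argument establishes, and the printed version follows anyway since $\alpha$ is decreasing in the exponent.
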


\section{Conclusion}
Let us insist on the fact that our proof is very simple. The main argument is less than a page long and the more advanced mathematics are geometric series (if we ignore the computation of the Taylor polynomial, which is not needed). However, for $\beta\ge2$ this approach does not provide lower bounds of $\alpha\left(k,\beta\right)$ as tight as the bounds from \cite{shurquestion}.
We believe that Conjecture \ref{mainconj} holds and we were able to make some progress in that direction.

Let us call the word obtained by erasing the first period of a repetition the \emph{tail} of the repetition and let $\alpha'\left(k,\beta\right)$ be the growth of the language of the words that contains no $\beta$-power of tail of length at most $2$. It is probably the case that the following stronger conjecture holds 
\begin{conjecture}\label{strongconj}
 For any fixed integer $n\ge2$ and arbitrarily large integer $k$ the following holds
 \begin{equation}
\alpha'\left(k,\frac{n}{n-1}\right)=k+1-n-\frac{n-1}{k}+O\left(\frac{1}{k^2}\right)
   \end{equation}
 \begin{equation}
\alpha'\left(k,\frac{n}{n-1}^+\right)=k+2-n-\frac{n-1}{k}+O\left(\frac{1}{k^2}\right)
   \end{equation}
\end{conjecture}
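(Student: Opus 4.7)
Since every $\beta$-free word trivially avoids $\beta$-powers of short tail, we have $\alpha(k,\beta) \le \alpha'(k,\beta)$, and Theorem \ref{mainth} already supplies the lower-bound halves of both displayed equations in Conjecture \ref{strongconj}. The content of the conjecture is therefore the matching upper bounds; my plan is to prove them by a one-step extension count, dual to the one used in Lemma \ref{lowboundlemma}.

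A $\beta$-power with $\beta = n/(n-1)$ and tail length at most $2$ has period $p \le 2(n-1)$, so the language counted by $\alpha'(k,n/(n-1))$ is defined by a finite family of short forbidden patterns. Let $L_N$ denote the set of valid words of length $N$ and $C_N' = |L_N|$. For $w \in L_N$, call $a$ \emph{forbidden} if $wa \notin L_{N+1}$. A direct case analysis shows that each period $p$ contributes at most one forbidden letter: for $p \le n-1$ (tail length $|y|=1$) the letter $a = w[N-p+1]$ is unconditionally forbidden, and for $p \in [n, 2(n-1)]$ (tail length $|y|=2$) the letter $a = w[N-p+1]$ is forbidden precisely when the coincidence $w[N] = w[N-p]$ holds. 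Writing $M_p = |\{w \in L_N : w[N] = w[N-p]\}|$, Bonferroni's inequality gives
$$C_{N+1}' \le (k-(n-1))\, C_N' - \sum_{p=n}^{2(n-1)} M_p + E,$$
where $E$ counts pairs of periods whose forbidden letters coincide at the same word.

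A key observation makes $E$ tractable: any coincidence between two forbidden letters at the same $w$, with both periods drawn from $[1,n-1]$ or both from $[n,2(n-1)]$, would itself constitute a $\beta$-power of tail $1$ inside $w$, contradicting $w \in L_N$. Only mixed-range pairs contribute, and each requires two independent letter equalities in $w$, so plausibly $E = O(C_N'/k^2)$. Granting this together with the first-order decorrelation estimate $M_p = (1+O(1/k))\, C_N'/k$ for each $p \in [n,2(n-1)]$, a short computation yields $C_{N+1}' \le (k+1-n-(n-1)/k+O(1/k^2))\, C_N'$, and the conjectured upper bound on $\alpha'(k, n/(n-1))$ follows by Fekete's lemma. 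The $+$-version is handled identically after shifting the two ranges of $p$ to $[1,n-2]$ and $[n-1,2n-3]$, turning $k-(n-1)$ into $k-(n-2) = k+2-n$.

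The main obstacle is establishing these decorrelation estimates. Alphabet-permutation symmetry of $L_N$ reduces $M_p/C_N'$ to the probability that two specified positions in a uniformly random element of $L_N$ carry the same letter, but nothing elementary prevents this probability from being biased away from $1/k$. A natural attack would be to strengthen the induction by tracking, alongside $C_N'$, the joint distribution of the final $2(n-1)$ letters of words in $L_N$ and showing that these marginals converge to uniform up to $O(1/k)$ perturbations; the corresponding refinement of the lower-bound argument of Lemma \ref{lowboundlemma} would have to be carried out simultaneously so that the error terms still match at order $1/k^2$. This decorrelation step is where the real difficulty lies, and is presumably why Conjecture \ref{strongconj} is stated without proof.
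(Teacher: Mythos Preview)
The paper does not prove this statement: Conjecture~\ref{strongconj} appears in the Conclusion purely as an open problem, with no proof or proof sketch attached. So there is nothing to compare your proposal against on the paper's side.

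Your write-up is honest about this. You correctly observe that the lower-bound halves of both displayed equations are immediate from Theorem~\ref{mainth} together with the trivial inclusion of $\beta$-free words into the short-tail language, so only the upper bounds carry new content. Your inclusion--exclusion/Bonferroni set-up for the upper bound is reasonable, and the observation that same-range period collisions are ruled out because they would force a tail-$1$ repetition inside $w$ is a nice structural point. But, as you yourself say, the whole argument hinges on the decorrelation estimate $M_p = (1+O(1/k))\,C_N'/k$ and on $E = O(C_N'/k^2)$, neither of which you prove; ``granting this'' is exactly the step the paper does not know how to do either. So your proposal is a plausible plan of attack rather than a proof, and it does not go beyond what the paper already leaves open.

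One small point to be careful about in any future attempt: even if you establish $C_{N+1}' \le \gamma\, C_N'$ for all large $N$ with $\gamma = k+1-n-(n-1)/k+O(1/k^2)$, Fekete's lemma alone does not convert a termwise ratio upper bound into an upper bound on the growth rate without some additional submultiplicativity or regularity argument; you would need either $C_N'^{1/N} \to \alpha'$ directly or a matching lower bound on $C_N'$ to close the loop.
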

That is, the coefficient of the term $\frac{1}{k}$ is probably dictated by the repetitions of tail of length at most $2$. It might even be true that the coefficient of the term  $\frac{1}{k^j}$ is dictated by the repetitions of tail of length at most $j+1$. This idea was already discussed in more details in \cite[Section 5]{shurquestion}.

Let use finally recall that the result that we showed is in fact slightly stronger since for $k$ large enough, we have
$$\alpha\left(k,\frac{n}{n-1}\right)\ge \frac{k+2 -n + \sqrt{4 + (k - n)^2 - 4 n}}{2}\,.$$
Conjectures \ref{mainconj} and \ref{strongconj} both imply that this bound is tight up to $O\left(\frac{1}{k^2}\right)$, but this bound might be tighter than that. The same might also be true for the bound on $\alpha\left(k,\frac{n}{n-1}^+\right)$. For $n=2$ these lower bounds can be compared to Theorem 2 of \cite{shurquestion} and in this case our lower bounds are only tight up to $O\left(\frac{1}{k^2}\right)$.

\end{document}